\newtheorem{thm}{Theorem}
\newtheorem{lem}[thm]{Lemma}
\newtheorem{pro}[thm]{Proposition}
\newcommand{\fo}{\phi}
\newcommand{\conv}{\mathop{\mathbb{CH}}}
\newcommand{\one}{{\mathbbm{1}}}
\newcommand{\RR}{{\mathbb{R}}}
\title{Compact linear programs for 2SAT}
\author{David Avis$^{1,2}$ 
\thanks{Email: \texttt{avis@cs.mcgill.ca}}
\and Hans Raj Tiwary$^{3}$
  \thanks{Email: \texttt{hansraj@kam.mff.cuni.cz}}}
\let\cite\citep
\newlength{\normalparindent}
\newlength{\normalparskip}
\begin{document}
\maketitle
\footnotetext[1]{GERAD and School of Computer Science, McGill University,
  3480 University Street, Montr\'eal, Qu\'ebec, Canada H3A 2A7.}
\footnotetext[2]{Graduate School of Informatics,
  Kyoto University, Sakyo-ku, Yoshida Yoshida, Kyoto 606-8501, Japan.}
\footnotetext[3]{Department of Applied Mathematics (KAM) and Institute of Theoretical Computer Science (ITI), Charles University, Malostransk\'e n\'am. 25, 118 00 Prague 1, Czech Republic.}

\begin{abstract}
For each integer $n$ we 
present an explicit formulation of a
compact linear program, with $O(n^3)$ variables and constraints, which determines the satisfiability of any 2SAT formula with
$n$ boolean variables by a single linear optimization. This contrasts with the fact that the natural
polytope for this problem, formed from the convex hull of all satisfiable formulas and their
satisfying assignments, has
superpolynomial extension complexity. Our formulation is based on multicommodity flows.
We also discuss connections of these results to the stable matching problem.
\end{abstract}
{\bf Keywords:} 2SAT, multicommodity flows, extension complexity, stable matchings
\section{Introduction}
\label{intro}
Let $n$ be a positive integer and let $ x_1, ..., x_n, \bar{x_1},...,\bar{x_n}$ be a corresponding
set of $2n$ boolean valued literals. 
An input for the 2SAT problem is a formula $\fo$ representing the conjunctive normal form of a collection 
of $m$ 
clauses each of which contains two literals. The formula is satisfiable if an assignment
of truth values to $x_1,...,x_n$ makes at least one literal in each clause true.
The 2SAT problem can be solved optimally in $O(n+m)$ time by an algorithm due to 
Aspvall et al.~\cite{APT79} based on path-finding. 
Let $w_1,...,w_n$ be a set of $n$ integer weights.
The problem of finding a satisfying assignment of maximum weight
is known as the weighted 2SAT problem. It is NP-hard 
since it contains the maximum independent set problem for graphs as a special case.
In the main part of this note we are concerned with polyhedra related to the unweighted 2SAT problem.

Consider solving unweighted 2SAT for a formula $\fo$ by using a linear program
of the form $max~c^T y: Ay \le b$ of small size.
More formally we call such a formulation {\em compact} if:
(a) the dimensions of $A$ are polynomially bounded in $n$,
(b) there is an $O(log~n)$-space algorithm which constructs
an objective function $c=c_\fo$
from  formula $\fo$, and 
(c) there is an $O(log~n)$-space algorithm that takes the solution to
$max~c_\fo^T y: Ay \le b$ and decides whether or not $\fo$ is satisfiable.

In \cite{AT15} the authors showed that a natural LP formulation for 2SAT
has superpolynomial extension complexity. Recently this lower bound was improved
by G\"o\"os to $2^{\Omega(n/\log{n})}$ where $n$ is the size of the 2SAT formula \cite{Goos0016}. 
This implies that there is no
polyhedron of polynomial size that projects onto this polyhedron.
However, since the unweighted 2SAT problem is
in $P$ and linear programming is $P$-complete, compact LPs for it must exist. In fact, in
Avis et al. \cite{ABTW} a direct method is given to produce a compact LP from a polynomial
time algorithm. Applied to the algorithm of \cite{APT79} this produces an LP with
$O(n^4 log~n )$ constraints. The LP formulation is not explicit but is produced by a 
compiler\footnote{https://gitlab.com/sparktope/sparktope}
which transforms an implementation of the given algorithm
in a simple programming language. In the next section we give an explicit
compact LP for 2SAT with $O(n^3)$ constraints and variables which is based on multicommodity flows.
In Section \ref{remarks} we discussion connections with the stable matching problem 
and give some open problems.

\section{2SAT}
\label{2sat}
Let $\fo$ be an instance of 2SAT with $n$ variables.
We may assume that there
is no clause of the form $x_i \vee \bar{x_i}$, since this is always satisfied. 
Therefore there are at most $2n^2-n$ distinct clauses
which we may label, say lexicographically, $C_1,...,C_{2n^2-n}$.
The formula $\fo$ may be expressed by 
a 0/1 vector $y^\fo$ of length $2n^2-n$ by setting $y_i^\fo=1$ if and only if $C_i \in \fo$.
Let $x$ denote a 0/1 vector of length $n$ and $\one$ denote the vector of all ones, where the
length is defined by the context.
A natural polytope for 2SAT can be defined in $2n^2$ dimensions by
\begin{equation}
\label{Qn}
Q_n ~=~ \conv~\{(y^\fo,x) : \forall~x,~\fo~s.t. ~x{\it ~is~a~satisfying~assignment~of~\fo} \}
\end{equation}
which is the convex hull of all satisfiable formulae with their satisfying assignments. 

\begin{pro}
\label{ext}
Give a 2SAT formula $\fo$ we can solve either the unweighted or weighted satisfiability program
by solving a single LP over $Q_n$, which has superpolynomial extension complexity.
\end{pro}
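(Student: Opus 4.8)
The statement bundles two claims, and I would prove them separately. The first is that a single linear optimization over $Q_n$ decides satisfiability of a given $\fo$ (and even returns a maximum-weight satisfying assignment); the second is the lower bound on the extension complexity of $Q_n$.

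For the reduction I would engineer an objective $c_\fo$ whose maximizer is forced to have its $y$-block equal to $y^\fo$. Concretely, on the $y$-coordinates put the coefficient $+1$ for every clause present in $\fo$ and $-1$ for every clause absent from $\fo$, and put $0$ on all $x$-coordinates. Since $Q_n$ is the convex hull of the integral points $(y^\psi,x)$ with $x$ a satisfying assignment of $\psi$, its vertices lie among these points, and the value of $c_\fo^{\trans}(y^\psi,x)$ at such a vertex is $\lvert\{i: C_i\in\fo\cap\psi\}\rvert-\lvert\{i:C_i\in\psi\setminus\fo\}\rvert\le\lvert\fo\rvert$, with equality exactly when $\psi=\fo$. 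Hence the optimum over $Q_n$ equals $\lvert\fo\rvert$ precisely when some vertex carries $y=y^\fo$, i.e.\ precisely when $\fo$ has a satisfying assignment; reading off the optimal value (an $O(\log n)$-space test) decides unweighted satisfiability.

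For the weighted version I would fold the two goals into one scaled objective $M\,c_\fo+c_w$, where $c_w$ is $0$ on the $y$-block and equal to the weight vector $w$ on the $x$-block, and $M>\sum_j\lvert w_j\rvert$. The choice of $M$ makes one unit of the matching term outweigh the entire spread of the weight term across the vertices of $Q_n$, so for satisfiable $\fo$ the dominant term pins $y=y^\fo$ and the subordinate term then maximizes $\sum_j w_j x_j$ over the vertices with that $y$. Those vertices are exactly $\fo$ paired with its satisfying assignments, and since a linear maximum on a polytope is attained at a vertex the returned $x$ is integral, giving a genuine maximum-weight satisfying assignment.

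The extension-complexity half needs no new work: $Q_n$ is the natural 2SAT polytope introduced in \cite{AT15}, so the superpolynomial lower bound proved there --- and sharpened to $2^{\Omega(n/\log n)}$ by \cite{Goos0016} --- transfers directly. The only step demanding care in the reduction is verifying that the vertices of $Q_n$ with $y=y^\fo$ are precisely $\{(y^\fo,x): x\text{ satisfies }\fo\}$, with no spurious vertex having integral $y$ but fractional $x$, together with checking the threshold on $M$; both are immediate from $Q_n$ being an integer hull but should be stated explicitly rather than left implicit.
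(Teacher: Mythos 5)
Your reduction half is sound and is essentially the paper's own argument. The $\pm 1$ objective on the $y$-block is exactly what the paper uses for the unweighted case, and your $M\,c_\fo + c_w$ with $M > \sum_j |w_j|$ is just a rescaling of the paper's $c^{\trans}y + \frac{1}{3W}w^{\trans}x$; in fact your threshold analysis, which measures the \emph{spread} of $w^{\trans}x$ over 0/1 points (at most $\sum_j|w_j|$, since the minimum is $-\sum_{w_j<0}|w_j|$ and the maximum is $\sum_{w_j>0}w_j$), is slightly sharper than the paper's, which effectively uses the cruder bound $|w^{\trans}x|\le W$ and hence the factor $3W$. Your observation that every defining point of $Q_n$ is a vertex, because all such points are 0/1 vectors, also correctly disposes of the ``spurious vertex'' worry you raise at the end.

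The genuine gap is in the extension-complexity half, which you dismiss as needing ``no new work.'' The result of \cite{AT15} is not a lower bound on $Q_n$: it produces a family of 2SAT formulas $\fo_n$ such that the polytope of satisfying assignments of a \emph{fixed} formula, $\conv\{x \in \{0,1\}^n : x \text{ satisfies } \fo_n\}$, has superpolynomial extension complexity. That is a polytope in $\RR^n$ attached to a single instance, not the instance-plus-assignment polytope $Q_n \subset \RR^{2n^2}$, so nothing ``transfers directly,'' and the same remark applies to the sharpening by \cite{Goos0016}. The missing argument --- which is precisely what the paper supplies --- is: (i) since $Q_n$ is a 0/1 polytope, fixing the $y$-block to the value $y^{\fo_n}$ cuts out a \emph{face} of $Q_n$, namely the optimal face of the linear functional $\sum_{i: y_i^{\fo_n}=1} y_i - \sum_{i: y_i^{\fo_n}=0} y_i$; (ii) this face is affinely isomorphic to the satisfying-assignment polytope of $\fo_n$, the object \cite{AT15} actually bounds; (iii) the extension complexity of a face of a polytope is at most that of the polytope, so $\xc(Q_n)$ is superpolynomial. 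Without (i)--(iii), the second claim of the proposition rests on an appeal to a result that does not exist in the form you cite it.
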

\begin{proof}
Let $\fo$ be a 2SAT formula and first consider the unweighted satisfiability problem.
Define a vector $c$ of length $2n^2-n$ by setting $c_i=1$ if $y_i^\fo=1$ and $c_i= -1$ otherwise.
Consider $z^*=\{max~c^T y : (y,x) \in Q_n \}$ and let $(y^*,x^*)$ be an optimal solution. It is easy to verify that 
$z^* = \one^T y^\fo$ if $\fo$ is satisfiable and
$z^* \le \one^T y^\fo~-1$
otherwise.

For the weighted satisfiability problem,
let $w=(w_1,...,w_n)$ be the vector of given integer weights
and set $W=\sum_{i=1}^n |w_i|$.
Consider $z^*=\{max~ c^T y + \frac{1}{3W} w^T x : (y,x) \in Q_n \}$.
If $\fo$ is unsatisfiable, then 
$z^* \le \one^T y^\fo -1 + \frac{W}{3W} = \one^T y^\fo -\frac{2}{3}$.
Otherwise $z^* \ge \one^T y^\fo - \frac{W}{3W}  = \one^T y^\fo -\frac{1}{3}$.
So again inspecting the value of $z^*$ determines the satisfiability of $\fo$.
Let $(y^*,x^*)$ be an optimum solution.
When $\fo$ is satisfiable we must have $c^T y^* = \one^T y^\fo$  and $x^*$
must be a satisfying assignment of $\fo$. 
So the values of $x_i^*$ at optimality will 
give a maximum weight satisfying assignment.

Next we consider the extension complexity of $Q_n$. 
First observe that if we fix a 2SAT formula $\fo$ and restrict ourselves
to the face where the first $2n^2-n$ coordinates are equal to $y^\fo$ we obtain a face of $Q_n$
that contains all satisfying assignments for $\fo$.
It was shown in \cite{AT15} that 
there is a family of 2SAT formulas $\fo_n$,
indexed by the number of variables $n$, 
for which the set of satisfying assignments 
has superpolynomial extension complexity. 
Since the satisfying assignments for the formula $\fo_n$
correspond to a face of $Q_n$ this face has superpolynomial extension complexity. 
It follows that $Q_n$ does also.
\end{proof}
The proposition implies that a direct approach to solving 2SAT as a linear program
over $Q_n$
will require exponentially many constraints. In what follows we give a completely different 
construction. It follows from the proposition that there will be no projection
of this polytope onto $Q_n$.

In the sequel we define variables $x_{n+i}$ to denote the literals $\bar{x}_i$, for 
$i=1,...,n$. 
Subscripts should be taken $mod~2n$ and we usually omit
this for clarity.
For a given $n \ge 2$ consider the complete bidirected graph on $2n$ vertices labeled
$x_1,\ldots,x_{2n}$. 
We add vertices $t_1,\ldots,t_{2n}$ and edges $(t_i,x_i),(x_i,t_i),(x_{n+i},t_{n+i}),$
and $(t_{n+i},x_{n+i})$ to the graph to obtain a graph with $4n$ vertices and 
$2n(2n-1)+4n=4n^2+2n$ edges. Call this graph $D_n$. 
We define a multicommodity flow problem with $2n$ commodities on $D_n$ as follows.
For each $i=1,...,2n$,
there are a pair of terminals
$t_i$ and $t_{n+i}$ and a distinct flow, labeled flow $i$, between them.
The capacity on edge $(t_i, x_i)$ is one and all other capacities are infinite.
Therefore a unit flow from $t_i$ to $t_{n+i}$ saturates only the initial edge and corresponds
to a path in $D_n$ between these two terminals. 
The sum of the maximum flows over all $2n$ commodities must therefore be at most $2n$
and this will be achieved if and only if there is a path between each of the $2n$
pairs of terminals.
Now consider the following
multicommodity flow polytope where $f_{ab}^k$ denotes the value of flow
$k$ on edge $ab$ of $D_n$ 
and $\delta_{ij}=1$ if $i=j$ and zero otherwise.

\[\begin{array}{rclr}
\delta_{i,k}f^i_{t_ix_i}+\displaystyle\sum_{i\neq j}f^k_{x_jx_i}&=&
\displaystyle\sum_{i\neq j}f^k_{x_ix_j}+\delta_{n+i,k}f^{n+i}_{t_{n+i}x_{n+i}}&i,k=1,\cdots,2n\\
P_n:~~~~~~~~~~~~~~~~~~~~~~~~0~\leqslant~f^i_{t_ix_i}&\leqslant&1&i=1,\cdots,2n\\
f^k_{x_ix_j}&\geqslant&0&~i,j,k=1,\cdots,2n,\\&&& i \neq j
\end{array}\]

Let $f$ denote the ensemble of $2n \cdot 2n(2n-1)+4n=8n^3-4n^2+4n$ variables
$\{f^i_{t_ix_i}, f^k_{x_ix_j} \}$ ordered in any consistent way.
The polytope $P_n$ has $8n^3-4n^2+6n$
inequalities and $4n^2$ equations. 
Note that the constraints consist of $2n$ {\em disjoint} copies of single commodity
flow constraints and thus are
totally unimodular.
So in view of the upper and lower bound constraints the polytope has 0/1 vertices.

Let $\fo$ be a 2-SAT formula and suppose it contains the clause $x_i\lor x_j$ for some
$i$ and $j$.
This means that $\fo$ forces 
implications $\overline{x}_i\Rightarrow x_j$ and $\overline{x}_j\Rightarrow x_i$. 
In the graph $D_n$ these implications correspond to edges $x_{n+i},x_j$ and
$x_{n+j},x_i$ respectively. 
For each such clause that is missing from $\fo$ these implications are missing also.
We construct an {\em implication graph} $G_n$ from $D_n$ for $\fo$ by removing the
corresponding edges for all missing clauses.
In the flow setting, if clause $x_i\lor x_j$ is missing we
set the variables $f^k_{x_{n+i}x_j}=f^k_{x_{n+j}x_i}=0$, 
$k=1,...,2n$.
By so doing for each clause missing from $\fo$ we
restrict ourselves to a face of $P_n$ which we denote
$P^{\fo}_n$.
A {\em maximum commodity flow} is a feasible vector in $P_n$ that maximizes $\sum_{i=1}^{2n} f^i_{t_ix_i}$.
\begin{lem}
\label{max}
Any maximum commodity flow lying on the face $P^{\fo}_n$ has $f^i_{t_ix_i}=f^{n+i}_{x_{n+i}t_{n+i}}=1$
for some $i$ if and only if the formula $\fo$ is unsatisfiable.
\end{lem}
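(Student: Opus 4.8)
The plan is to reduce the lemma to the classical strongly-connected-component criterion for 2SAT of Aspvall et al.~\cite{APT79}, by reading the value of each commodity as a reachability statement in the implication graph $G_n$.

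First I would unpack the flow model on the face $P^\fo_n$. For each commodity $k$ the conservation constraints make $f^k_{t_kx_k}$ the single value of a flow that is injected at the literal vertex $x_k$ and withdrawn at $x_{n+k}$, routed through the literal edges $f^k_{x_ax_b}\ge 0$; on the face $P^\fo_n$ exactly the edges of $G_n$ are available, the remaining ones being fixed to zero. Crucially the $2n$ commodities share no constraint: every literal edge has infinite capacity and the only finite (unit) capacity sits on commodity $k$'s own injection edge $(t_k,x_k)$. Hence $P^\fo_n$ is a direct product of $2n$ independent single-commodity flow polytopes, a maximum commodity flow maximises each $f^k_{t_kx_k}$ separately, and the maximal value of each commodity is the same number in every maximum flow.

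Next I would prove the pivotal correspondence: in a maximum flow $f^k_{t_kx_k}=1$ if and only if $G_n$ contains a directed path from $x_k$ to $x_{n+k}$. The ``if'' direction pushes one unit along such a path. For ``only if'' I would use flow decomposition: any positive injection at $x_k$ must, by conservation and the fact that $x_{n+k}$ is the unique sink of commodity $k$, leave through $x_{n+k}$, so the support of the flow contains a directed path from $x_k$ to $x_{n+k}$, every edge of which is present in $G_n$. Since each surviving edge of $G_n$ is by construction an implication forced by a clause of $\fo$, such a path is exactly a chain of implications witnessing the logical consequence $x_k\Rightarrow x_{n+k}$ (recall $x_{n+k}=\bar{x}_k$). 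Because each commodity's value is forced to its maximum, the saturated set of commodities is identical across all maximum flows, which is what lets me replace the universal ``any maximum commodity flow'' by a statement purely about $G_n$.

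Finally I would assemble the pieces. A saturated commodity $i$ encodes $x_i\Rightarrow \bar{x}_i$ and a saturated commodity $n+i$ encodes $\bar{x}_i\Rightarrow x_i$; both occur for a common index $i$ precisely when some literal and its negation lie in one strongly connected component of $G_n$, which by \cite{APT79} holds if and only if $\fo$ is unsatisfiable. This yields both directions of the equivalence. The step I expect to be the main obstacle is the ``only if'' half of the path correspondence: one must argue that a maximum flow cannot inject a unit of commodity $k$ without a genuine directed path from $x_k$ to $x_{n+k}$ in $G_n$, ruling out that the unit is manufactured by circulation on the infinite-capacity literal edges. Establishing this cleanly is where flow decomposition, the product structure of $P^\fo_n$, and the uniqueness of each commodity's source and sink must be invoked.
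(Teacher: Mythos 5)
Your proof is correct and takes essentially the same route as the paper's: saturated commodities correspond to directed paths in the implication graph $G_n$, and the existence of both paths $x_i \rightarrow x_{n+i}$ and $x_{n+i} \rightarrow x_i$ is equivalent to unsatisfiability of $\fo$ (the Aspvall et al.\ criterion). You are in fact more explicit than the paper on two points it leaves implicit: the flow-decomposition step extracting a path from a saturated commodity, and the product structure of $P^{\fo}_n$ that forces \emph{every} maximum commodity flow, not merely some feasible flow, to saturate commodities $i$ and $n+i$ when $\fo$ is unsatisfiable.
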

\begin{proof}
If $f^i_{t_ix_i}=f^{n+i}_{x_{n+i}t_{n+i}}=1$, for some $i$, then these two flows imply paths
from $x_i$ to $x_{n+i}$ and back. The first path gives a chain of implications implying that
if $x_i$ is true then $x_{n+i}=\bar{x_i}$ is true, a contradiction. Similarly the
second path gives implications showing that if $x_i$ false then it is
true. 
Therefore the formula $\fo$ is
unsatisfiable.

Conversely, if $\fo$ is unsatisfiable then there is a path from $x_i$ to $x_{n+i}$
and a path from $x_{n+i}$
to $x_{i}$.
Thus a unit flow can be sent from $t_i$ to 
$t_{n+i}$ and from $t_{n+i}$
to $t_i$ and $f^i_{t_ix_i}=f^{n+i}_{x_{n+i}t_{n+i}}=1$.
Since these two paths cannot use any edges
corresponding to missing clauses in $\fo$, the corresponding flows lie on the face $P^{\fo}_n$. 
\end{proof}

Testing the satisfiability of $\fo$ can be achieved
by maximizing a suitably
chosen linear objective function over $P_n$ designed so that all maximum solutions
are found on the
face $P^{\fo}_n$. 

\begin{thm}
\label{thm:2sat}
Let $\fo$ be a 2-SAT formula with $n$ variables. Consider the following LP:

\[z^*~=~\max \displaystyle\sum_{i=1}^{2n} f^i_{t_ix_i}
-(2n+1)\sum_{\substack{(x_i\lor x_j)\notin \fo \\1\leqslant k\leqslant 2n}}(f^k_{x_{n+i}x_j}+f^k_{x_{n+j}x_i})\\
\]
\[
f \in P_n
\]
Then, $\fo$ is unsatisfiable
if and only if at optimality there is an $i$ such that
$f^i_{t_ix_i}=f^{n+i}_{x_{n+i}t_{n+i}}=1$.
A certificate for the satisfiability/unsatisfiability of $\fo$ can
be obtained from the optimum solution.
\end{thm}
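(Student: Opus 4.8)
The plan is to reduce the statement to Lemma~\ref{max} by showing that the penalty term forces every optimal solution of the LP to lie on the face $P^\fo_n$ and, moreover, to be a maximum commodity flow there. Once this identification is made, the characterization of when such a flow has $f^i_{t_ix_i}=f^{n+i}_{x_{n+i}t_{n+i}}=1$ is exactly the content of Lemma~\ref{max}, and both directions follow at once. First I would record the two easy bounds. The reward term $\sum_{i=1}^{2n} f^i_{t_ix_i}$ never exceeds $2n$, since each source edge has capacity one. On the other hand, a maximum commodity flow on $P^\fo_n$ uses no missing-clause edge, so it pays no penalty and attains objective value equal to the number $p$ of indices $k$ for which $x_k$ reaches $x_{n+k}$ in the implication graph $G_n$; since such a flow is feasible in $P_n$, this gives $z^*\ge p$.

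For the matching upper bound, which is the technical heart of the argument, I would take an arbitrary feasible $f\in P_n$ and decompose each commodity flow into path flows from $t_k$ to $t_{n+k}$ together with cycles. Cycles contribute nothing to the reward and only weakly increase the penalty, so an optimal $f$ carries none. Writing $a_k$ for the amount of commodity $k$ routed along paths that use only present edges and $b_k$ for the amount routed along paths using at least one missing-clause edge, the reward of commodity $k$ is $f^k_{t_kx_k}=a_k+b_k\le 1$, while the penalty is at least $(2n+1)\sum_k b_k$. Hence the objective is at most $\sum_k a_k-2n\sum_k b_k\le\sum_k a_k$, and since $a_k>0$ requires a present-edge path from $x_k$ to $x_{n+k}$ in $G_n$ we get $\sum_k a_k\le p$. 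Therefore $z^*=p$, and equality forces $b_k=0$ for every $k$ (no missing-clause edge is used, i.e.\ $f\in P^\fo_n$) and $a_k=1$ for every connectable pair. In other words, the optimal solutions of the LP are precisely the maximum commodity flows lying on $P^\fo_n$.

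With this in hand the equivalence is immediate. If $\fo$ is unsatisfiable then the witnessing index $i$ has directed paths both from $x_i$ to $x_{n+i}$ and back, so both commodities $i$ and $n+i$ are connectable and are therefore fully routed in every optimum, giving $f^i_{t_ix_i}=f^{n+i}_{x_{n+i}t_{n+i}}=1$; conversely, if $\fo$ is satisfiable then by Lemma~\ref{max} no index admits paths in both directions, so in any optimum at most one of the two commodities is routed and the condition fails for every $i$. For the certificate I would take an optimal vertex of $P_n$, which is integral by the total unimodularity noted above, so each routed commodity is a genuine directed path in $G_n$: in the unsatisfiable case the paths realizing $f^i_{t_ix_i}=1$ and $f^{n+i}_{x_{n+i}t_{n+i}}=1$ spell out the implication chains $x_i\Rightarrow\cdots\Rightarrow\bar x_i$ and $\bar x_i\Rightarrow\cdots\Rightarrow x_i$, the standard refutation, while in the satisfiable case the reachability read off from the optimum yields a satisfying assignment by the strongly-connected-component argument of Aspvall et al.~\cite{APT79}. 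The main obstacle I anticipate is precisely the upper-bound step: making the penalty estimate tight for \emph{fractional} flows, so that the coefficient $2n+1$ pins down the optimal face exactly rather than merely certifying one convenient optimum. The path/cycle decomposition above is what accomplishes this.
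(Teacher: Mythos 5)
Your proposal is correct in substance, but it reaches the key step by a genuinely different argument from the paper's. The paper never determines $z^*$: it notes $0\le z^*\le 2n$, takes an optimal \emph{extreme point} of $P_n$ --- which is $0/1$ by the total unimodularity remarked before the theorem --- and observes that if such a vertex put flow on a missing-clause edge, the single penalty $-(2n+1)$ would outweigh the maximum possible reward $2n$ and force $z^*<0$, a contradiction; hence every optimal vertex, and by convexity every optimal solution, lies on $P^{\fo}_n$, at which point the objective reduces to the total flow and Lemma~\ref{max} finishes. You instead bound the objective of an \emph{arbitrary fractional} feasible point via path/cycle decomposition, obtaining the sharper conclusions that $z^*=p$ and that every optimum (not just every optimal vertex) routes exactly the connectable commodities and lies on $P^{\fo}_n$. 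What your route buys: the characterization of all optima is explicit and does not lean on extreme-point structure. This is welcome because $P_n$ is in fact an \emph{unbounded} polyhedron (circulations on internal edges are feasible, as their capacities are infinite), so the paper's inference ``all optimum vertices, and hence all optimum solutions, occur on the face'' silently requires that the rays of the optimal face avoid missing edges; your inequality handles fractional points and rays in one stroke. What the paper's route buys is brevity. Both proofs then use Lemma~\ref{max} and extract certificates the same way, from an integral optimal vertex.

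One local error in your write-up: the claim that an optimal $f$ ``carries no cycles'' is false --- adding a circulation around a directed cycle of \emph{present} internal edges to any optimum changes neither feasibility nor the objective, so optima with cycles exist. Moreover, dismissing cycles is not what you need; the parenthetical ``$b_k=0$ for every $k$, i.e.\ $f\in P^{\fo}_n$'' is where cycles actually matter, since membership in the face requires that \emph{no} missing-edge variable is positive, whether it lies on a path or on a cycle. The repair is immediate within your own framework: keep the cycles in the decomposition, note that flow they carry across missing edges also incurs penalty at least $(2n+1)$ per unit, so the objective is at most $\sum_k a_k - 2n\sum_k b_k - (2n+1)C$ with $C\ge 0$ the missing-edge flow on cycles, and conclude at equality that $C=0$ as well as $b_k=0$ for all $k$; cycles supported on present edges are then harmless because they leave every missing-edge variable at zero. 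With that correction your argument is complete and, if anything, proves a slightly stronger statement than the paper records.
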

\begin{proof}
The LP is feasible, since this can be achieved by setting all variables to zero, and the objective function 
is upper bounded 
by $2n$. Therefore optimum solutions exist and $0 \le z^* \le 2n$. We show that they lie on the face $P^{\fo}_n$.
Consider any extreme point of $P_n$ that provides an optimum solution for the LP. As remarked above this is integral and is in fact
0/1 valued since $f^i_{t_ix_i} \le 1,  i=1,...,2n$.
Suppose that $(x_i\lor x_j)$ is a clause missing from $\fo$. 
For $k=1,...,2n$, if 
 either of the variables $f^k_{x_{n+i}x_j}$ or $ f^k_{x_{n+j}x_i}$ is set to one then this incurs a 
penalty of $-(2n+1)$ in the objective function. The rest of the objective function can only contribute $2n$ 
at most and so $z^* < 0$, a contradiction.
Therefore all optimum vertices, and hence all optimum solutions, occur on the face $P^{\fo}_n$.
It now follows from Lemma \ref{max} that the satisfiablility/unsatisfiability
of $\fo$ can be determined from the optimum solution. The lemma gives
a certificate of unsatisfiability. For a certificate of satisfiability we
construct a satisfying assignment. For each $i$ such that
 $f^i_{t_ix_i} = 1$ set $x_{n+i}=1$ and $x_i=0$. For each edge $x_i, x_j$
with $x_i=1$ and $x_j$ unlabelled, set $x_j=1$ and $x_{n+j}=0$.
Continuing in this
way we label a set of strong components in $G_n$.
If at termination there remain any 
unlabelled vertices $x_i$ we choose
one arbitrarily, set $x_i=1$, $x_{n+i}=0$ and continue as before.
Since no strong component can contain both $x_i$ and $x_{n+i}$ we have
consistently labelled
$G_n$ and have a satisfying assignment.
\end{proof}

Both the construction of the
objective function and determining
the (un)satisfiability of $\fo$ from the optimum solution can be
done with $O(\log n)$ space. So the LP described in Theorem \ref{thm:2sat} is compact.
As remarked earlier in this section this implies that it cannot be linearly projected onto $Q_n$
and, in particular, the satisfying assignment $x_i$ cannot be directly read from the optimum solution.
Furthermore it apparently gives no insight into solving the weighted 2SAT problem.

\section{Connections to stable matchings and open problems}
\label{remarks}

From the point of view of extension complexity, 2SAT is 
an interesting problem
since the unweighted version is in $P$ and the weighted version
is NP-hard. Although the natural polyhedral formulation of 2SAT 
has high extension complexity we have shown that a compact explicit polyhedral formulation
for the unweighted version exists. 
A problem of similar type is the stable matching problem, which has 
many variations and has received
considerable attention since the seminal Gale-Shapley paper in 1962.
For recent developments see Cseh and Manlove \cite{CM16}.

In the stable matching problem we have a group of $2n$
people each of who
has a list ranking the other $2n-1$ people in order of preference. 
A {\em matching} is a set of $n$ pairs of people such that each person is in exactly
one pair. An {\em blocking pair} is a set of 4 people $i,j,k,l$ such that $i$ is matched
to $k$ but prefers $j$, and $j$ is matched to $l$ but prefers $i$.
A matching is {\em stable} if it contains no blocking pair.
A considerable amount of research has been done on this problem
and computationally the 2SAT problem and stable matchings are closely related.
Like 2SAT,
there is a polynomial time algorithm for the problem (Irving~\cite{Ir85})
and the weighted version is NP hard (Feder~\cite{Fe92}).

From the polyhedral viewpoint we can define a polytope $R_{2n}$ for the
stable matching which is analogous to $Q_n$ for 2SAT.
Each instance $\psi$ for the stable matching problem consists of $2n$ permutations of $2n-1$ elements,
each of which can be encoded as a $(2n-1)$ by $(2n-1)$ permutation matrix.
Let the corresponding binary vector of length $2n(2n-1)^2$ be denoted $y^{\psi}$. A matching for
$\psi$ can be encoded as binary vector $x$ of length $(2n-1)n$
where $x_{ij}=1$ if and only if $i$ and $j$ are matched, for $1 \le i < j \le 2n$. 
We define $R_{2n} \subset \RR^{n(2n-1)(4n-1)}$ by
\begin{equation}
\label{R2n}
R_{2n} ~=~ \conv~\{(y^{\psi},x) : \forall~x,~\psi~s.t. ~x{\it ~is~a~stable~matching~for~\psi} \}
\end{equation}
which is the convex hull of all stable matching problems with their stable matchings.
Note that in the above we only consider instances $\psi$ which have stable matchings.
Since the weighted stable matching problem can be solved by an LP
over $R_{2n}$ we expect that it has high extension complexity, a fact we now prove.

\begin{pro}
$R_{2n}$ has superpolynomial extension complexity.
\end{pro}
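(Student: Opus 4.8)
The plan is to mirror the proof of Proposition~\ref{ext}, exploiting the tight computational link between stable matchings and 2SAT alluded to above. The first step is to realise, for each fixed instance $\psi$, the convex hull of the stable matchings of $\psi$ as a face of $R_{2n}$. Fixing the preference block of coordinates to the value $y^\psi$ -- achieved, exactly as in the proof of Proposition~\ref{ext}, by maximising the linear functional $c^\trans y$ with $c_i=+1$ when $y^\psi_i=1$ and $c_i=-1$ otherwise -- carves out a face $S(\psi)$ of $R_{2n}$ whose vertices are precisely the binary matching vectors $x$ that are stable for $\psi$. Since extension complexity is monotone under passing to a face, $\xc(R_{2n})\ge\xc(S(\psi))$ for every $\psi$ admitting a stable matching, so it suffices to produce a family of instances for which $\xc(S(\cdot))$ is superpolynomial.

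The second step is to manufacture that family from the hard 2SAT instances $\fo_m$ of \cite{AT15}, whose satisfying-assignment polytopes are known to have superpolynomial extension complexity in $m$. Here I would invoke the known polynomial reduction from 2SAT to the stable matching problem (in the spirit of Feder~\cite{Fe92}), which assigns to $\fo_m$ an instance $\psi_m$ on $2n=\mathrm{poly}(m)$ people, each supplying a full strict ranking of the remaining $2n-1$, such that the stable matchings of $\psi_m$ encode the satisfying assignments of $\fo_m$. The crucial property to extract from the reduction is not merely this combinatorial correspondence but that it is realised by a coordinate projection (or, more generally, an affine map) $\pi$ sending $S(\psi_m)$ onto the satisfying-assignment polytope of $\fo_m$: each stable matching should determine a truth assignment by reading off whether certain designated pairs are matched, and conversely every satisfying assignment must arise this way.

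With such a $\pi$ in hand the argument closes immediately. Extension complexity does not increase under affine images, so $\xc(S(\psi_m))\ge\xc(\pi(S(\psi_m)))$, and the right-hand side is the extension complexity of the satisfying-assignment polytope of $\fo_m$, which is superpolynomial in $m$ by \cite{AT15}. Because $2n=\mathrm{poly}(m)$, superpolynomial growth in $m$ is equivalent to superpolynomial growth in the parameter $n$ of $R_{2n}$, up to the polynomial change of variable. Chaining the two monotonicity inequalities gives $\xc(R_{2n})\ge\xc(S(\psi_m))\ge\xc(\pi(S(\psi_m)))$, which is superpolynomial, as required.

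The step I expect to be the main obstacle is the second: pinning down a reduction whose output is genuinely a stable matching instance of the exact form encoded by $R_{2n}$, and verifying that the correspondence between stable matchings and satisfying assignments is carried by a \emph{linear} projection rather than by a mere bijection of vertices. Only a projection or affine map transfers the extension-complexity lower bound; a correspondence that is combinatorial but not affine would not suffice. The care therefore lies in arranging the reduction so that the truth value of each 2SAT variable of $\fo_m$ is literally a coordinate, or a fixed linear combination of coordinates, of the matching vector $x$, so that the hardness of \cite{AT15} can be transported cleanly onto a face of $R_{2n}$.
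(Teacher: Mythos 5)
Your proposal follows essentially the same route as the paper: fix the preference coordinates to carve out the face of $R_{2n}$ whose vertices are the stable matchings of a single instance, push the hard 2SAT family of \cite{AT15} through Feder's 2SAT-to-stable-matching reduction, and transfer the extension-complexity lower bound via monotonicity under faces and affine maps. The only difference is that what you flag as the ``main obstacle'' --- that the correspondence between stable matchings and satisfying assignments be affine rather than merely bijective --- is exactly what the paper settles by citing Theorem~8.2 of Feder~\cite{Fe92}, which transforms a 2SAT formula on $n$ variables into a stable matching instance on $4n$ people with the required structure.
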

\begin{proof}
Theorem 8.2 of Feder~\cite{Fe92} shows that a 2SAT formula with $n$ variables
can be transformed to a stable matching problem with $4n$ people
so that the set of solutions of the 2SAT problem correspond to the set of solutions of the
stable matching problem. Proceeding as in Proposition \ref{ext} we make use
of the family of 2SAT formulas $\phi_n$ from \cite{AT15} that yield
a family of faces of 2SAT polytopes that have superpolynomial
extension complexity. Applying Feder's transformation, each $\phi_n$ maps to an instance
$\psi_{4n}$ 
of the stable matching problem whose stable matchings correspond to the
satisfying assignments of $\phi_n$. If we now restrict ourselves to the face where the 
first $2n(2n-1)^2$ coordinates are $y^{\psi_{4n}}$ we obtain a face with all stable matchings 
of the instance $\psi_{4n}$. This is a face of $R_{4n}$ with superpolynomial extension complexity,
and the proposition follows.
\end{proof}

It is natural to ask if, like the 2SAT problem, there is an explicit compact LP formulation 
for the unweighted problem.
Indeed it is known that a stable matching problem with $2n$ people can be
formulated as a 2SAT problem with $O(n^2)$ variables and $O(n^2)$ clauses.
Algorithms for finding this formulation were given by Gusfield \cite{Gu88},
and Feder \cite{Fe94} whose algorithm requires $O(n^2)$ time and space.
At first glance it may appear that we can use our compact formulation for
2SAT via this transformation of stable matching problems to 2SAT problems.
Unfortunately this transformation cannot be performed using $O(log~n)$ space
so condition (b) of a compact formulation, as defined in the Introduction, fails.
So for the moment we must fall back on the method of \cite{ABTW} to 
obtain a compact formulation with $O(n^4~log~n)$ constraints.

A final open problem of this type
concerns the perfect matching problem which Rothvo{\ss} proved also has superpolynomial
extension complexity~\cite{Ro14}. The method of \cite{ABTW} 
applied to the $O(n^{2.5})$ time and $O(n^2)$ space
implementation of Edmonds' algorithm by Micali and Vazirani~\cite{MV80} 
yields an LP that has $O(n^{4.5}~log~n)$ constraints.
It is therefore of great interest to see if this bound can be significantly improved,
and if an explicit compact LP formulation for the perfect matching problem exists.
\section*{Acknowledgments}

Research of the first author is supported by the JSPS under
a Kakenhi grant and a Grant-in-Aid for Scientific Research
on Innovative Areas, `Exploring the Limits of Computation (ELC)'. 
The second author was supported by project GA15-11559S of GA \v{C}R.

\bibliographystyle{plain}
\bibliography{2sat}

\end{document}